\documentclass{amsart}

\usepackage{amssymb}
\usepackage[
    pagebackref,            
    colorlinks=true,        
    linkcolor=magenta,      
    citecolor=green,      
    urlcolor=blue,          
    bookmarks=true
]{hyperref}

\usepackage{cite}
\usepackage[all]{xy}
\usepackage{enumerate}
\usepackage{enumitem}
\usepackage{mathrsfs}
\usepackage{color}
\usepackage{tikz}
\usepackage{microtype} 
\usepackage{geometry}
\newtheorem{theorem}{Theorem}[section]
\newtheorem{lemma}[theorem]{Lemma}
\newtheorem{corollary}[theorem]{Corollary}

\theoremstyle{definition}

\newtheorem{remark}{Remark}[section]

\numberwithin{equation}{section}

\DeclareMathOperator{\Hess}{Hess}

\DeclareMathOperator{\Ric}{Ric}
\DeclareMathOperator{\Sec}{Sec}

\DeclareMathOperator{\rank}{rank}

\DeclareMathOperator{\trc}{tr}

\newcommand{\bbR}{\mathbb{R}}

\newcommand{\dd}{\mathrm{d}}

\allowdisplaybreaks[4]

\begin{document}

\title[A refined Schwarz lemma for \(V\)-harmonic maps]{A refined Schwarz lemma for \(V\)-harmonic maps}


\author{Guangwen Zhao}
\address{School of Mathematics and Statistics, Wuhan University of Technology, Wuhan 430070, China}
\curraddr{}
\email{gwzhao@whut.edu.cn}
\thanks{This work is partially supported by the National Natural Science Foundation of China (12001410)}


\subjclass[2020]{53C43, 30C80}

\keywords{Schwarz lemma, \(V\)-harmonic map, generalized dilatation}


\dedicatory{}

\begin{abstract}
    In this note, we establish a refined Schwarz lemma for \(V\)-harmonic maps. Specifically, we prove that if \(u\) is a \(V\)-harmonic map of generalized dilatation of order \(\beta \) from a complete Riemannian manifold with Bakry--\'Emery Ricci curvature bounded below by a constant \(-A\) to a Riemannian manifold with sectional curvature bounded above by a negative constant \(-B\), then
    \[
    u^*h\le \frac{Ac(\beta )}{BD(\beta )}g \le \frac{A\beta^2}{B}g,
    \]
    where \(c(\beta )=\beta^2/(1+\beta^2)\) and \(D(\beta )=1-\lfloor 1/c(\beta )\rfloor c(\beta )^2-(1-\lfloor 1/c(\beta )\rfloor c(\beta ))^2\). Equality in the second inequality holds if and only if \(\beta =1,\ 1/\sqrt{2},\ 1/\sqrt{3}, \cdots \). Our result improves the previous bounds obtained by Shen (J. Reine Angew. Math., 1984) for harmonic maps and by Chen--Li--Qiu (Nonlinear Anal., 2022) for \(f\)-harmonic maps. We also present some applications of our main theorem.
\end{abstract}

\maketitle

\tableofcontents

\section{Introduction}

The classical Schwarz--Pick lemma states that any holomorphic map from the unit disk in the complex plane into itself is distance-decreasing with respect to the Poincar\'e metric. Ahlfors \cite{ahlfors1938} extended this to holomorphic maps into Riemann surfaces with negative curvature, revealing that the phenomenon is driven by a comparison of domain and target curvatures. Chern \cite{chern1968} and Lu \cite{lu1968} then extended this result to more general dimensions and targets. In 1978, Yau \cite{yau1978} established a general Schwarz lemma for holomorphic maps from a complete K\"ahler manifold with Ricci curvature bounded below into a Hermitian manifold with holomorphic bisectional curvature bounded above by a negative constant, he proved that such maps are distance decreasing up to a constant depending only on these curvature bounds. Later, this result was subsequently refined by Royden \cite{royden1980}, who relaxed the target curvature condition to holomorphic sectional curvature, and by Yang--Chen \cite{yc1984} for complete Hermitian manifolds. In \cite{tosatti2007}, Tosatti treated the case where the almost complex structure is nonintegrable. Under curvature and torsion conditions on the canonical connection, he established a Schwarz lemma for holomorphic maps between almost Hermitian manifolds. In recent years, there is a rich body of work on this topic, which relaxes the curvature assumptions or broadens the manifold settings, see \cite{bs2026,cn2022,cdry2021,dry2021,ni2021,savas-halilaj2019,yang2021,yang2024,yu2022} and the references therein.

Another direction in extending the Schwarz lemma is to consider harmonic maps. Goldberg-Har'El \cite{gh1977} initiated this line of study in 1977. To overcome the difficulty that the curvature term of the target manifold in the Bochner formula for harmonic maps does not directly yield a lower bound proportional to the square of the energy density, they introduced the notion of bounded dilatation. They proved that harmonic maps of bounded dilatation satisfy a Schwarz-type estimate, under the assumptions that the domain has Ricci curvature bounded below and the target has sectional curvature bounded above by a negative constant. However, their bound depends on the dimensions of the domain and the target. Shen \cite{shen1984} subsequently introduced the broader notion of generalized dilatation, which is weaker than bounded dilatation. Every map of bounded dilatation automatically has generalized dilatation, whereas the converse only holds with a larger constant. By applying a local cutoff argument to the maximal eigenvalue of the pullback metric, Shen established a Schwarz lemma with a constant depending only on the curvature bounds and the dilatation order, thereby removing the dimensional dependence present in \cite{gh1977} and yielding a cleaner upper bound. In \cite{cjw2015}, Chen--Jost--Wang introduced the notion of \(V\)-harmonic maps, and in \cite{cjq2012}, Chen--Jost--Qiu developed its general framework. This concept encompasses harmonic maps under several important settings, as detailed in \cite{cjw2015}. Consequently, it is natural to investigate Schwarz lemmas for \(V\)-harmonic maps. In \cite{cz2017}, Chen and the author proved a Schwarz lemma for \(V\)-harmonic maps of generalized dilatation, where the constant depends on the dilatation order, the curvature bounds and the dimensions of the domain and the target. Later, Chen--Li--Qiu \cite{clq2022} treated the gradient case \(V=-\nabla f\) and refined the estimate in \cite{cz2017} by a local cutoff argument on the maximal eigenvalue of the pullback metric, obtaining a dimension-independent constant. However, their estimate remains less sharp compared with \cite{shen1984}. Very recently, there has also been related work on harmonic or \(V\)-harmonic maps in more general settings, see \cite{huang2024,hy2024,wang2024}.

In this paper, we continue the study of Schwarz lemmas for \(V\)-harmonic maps. Our main goal is to refine existing estimates so that they not only attain but strictly improve the sharpness of Shen’s original bound. Unlike the approaches in \cite{shen1984} and \cite{clq2022}, we do not work with the maximal eigenvalue of the pullback metric. Instead, we analyze the smooth energy density directly. By establishing a sharp algebraic inequality adapted to the generalized dilatation condition, we obtain a more refined estimate for the curvature term of the target manifold in the Bochner formula. This bypasses the regularity issue and avoids the cutoff loss associated with eigenvalue methods. Consequently, compared with \cite{clq2022}, we can treat a much broader class of vector fields \(V\) and derive significantly sharper estimates (see \eqref{eq-e} in Theorem~\ref{thm-a} and the third part in Remark~\ref{rem-a} for details). In the special case \(V=0\), although our simplified bound formally coincides with that of \cite{shen1984}, our refined estimate genuinely improves Shen’s classical constant for almost all \(\beta \) (see estimate \eqref{eq-d} in Theorem~\ref{thm-a} and the first part in Remark~\ref{rem-a} for details).

The remainder of this paper is organized as follows. In Section~\ref{sec-2}, we state and prove our main theorem, together with a detailed comparison with existing related results. Section~\ref{sec-3} is devoted to presenting applications of the main theorem.

\section{Main results}\label{sec-2}

Let \((M,g)\) and \((N,h)\) be two Riemannian manifolds, and let \(V\) be a smooth vector field on \(M\). A smooth map \(u:M\to N\) is called \(V\)-harmonic if it satisfies
\[
	\tau_V(u)=\tau(u)+\dd u(V)=0,
\] 
where \(\tau(u)\) is the tension field of \(u\). Denote \(\Delta_V=\Delta +\iota_V\circ \dd\) and the Bakry--\'Emery Ricci tensor by \(\Ric_V=\Ric -\frac{1}{2}\mathcal{L}_Vg\), where \(\iota\) is the interior product, \(\Ric \) is the Ricci tensor, and \(\mathcal{L}\) is the Lie derivative.

Let \(u:(M^m,g)\to (N^n,h)\) be a smooth map between Riemannian manifolds. Let \(\{e_i\}_{i=1}^m\) and \(\{\tilde e_\alpha \}_{\alpha =1}^n\) be local orthonormal frame fields, respectively. Denote 
\[
	\dd u(e_i)=\sum_\alpha u_i^\alpha \tilde e_\alpha ,
\]
and define 
\[
	U_{ij}=\langle \dd u(e_i),\dd u(e_j)\rangle =\sum_\alpha u_i^\alpha u_j^\alpha . 
\]
In other words, we have \(U=(u_i^\alpha )(u_i^\alpha )^\top \), where \((u_i^\alpha )^\top \) denotes the transpose of \((u_i^\alpha )\). Hence \(U=(U_{ij})\) is a \(m\times m\) symmetric positive semi-definite matrix. Therefore, the eigenvalues of \(U\) are nonnegative, ordered as 
\[
	\lambda_1\ge \lambda_2\ge \cdots \ge \lambda_m\ge 0.
\]
Recall that a map \(u:M\to N\) is called generalized dilatation of order \(\beta \), if there exists a positive constant \(\beta >0\) such that 
\begin{equation}\label{eq-z}
	\lambda_1(x)\le \beta^2(\lambda_2(x)+\cdots +\lambda_m(x))
\end{equation}
holds for every \(x\in M\) (see \cite{gh1977,shen1984}). Hence, imposing the generalized dilatation condition requires \(\dim M\ge 2\) and \(\dim N\ge 2\). 

Let \(k=\min \{m,n\}\), so that \(\rank U\le k\). With these notational conventions in place, we denote
\[
	e(u):=|\dd u|^2=\trc U=\sum_{i=1}^k\lambda_i,\qquad 
	Q(u):=(\trc U)^2-\trc (U^2)=e(u)^2-\sum_{i=1}^k\lambda_i^2=2\sum_{i<j}\lambda_i\lambda_j=2\left |{\bigwedge}^2\dd u\right |^2,
\]
and we have
\[
	u^*h\le \lambda_1g.
\]
Moreover, the definition of the generalized dilatation of order \(\beta \), namely \eqref{eq-z}, can be reformulated as 
\begin{equation}\label{eq-y}
	\lambda_1\le \beta^2(\lambda_2+\cdots +\lambda_k).
\end{equation}

We now state the main theorem of this paper. For \(\beta >0\), define 
\begin{equation}\label{eq-c}
	c(\beta )=\frac{\beta^2}{1+\beta^2},\qquad p(\beta )=\left \lfloor \frac{1}{c(\beta )}\right \rfloor,\qquad D(\beta )=1-p(\beta )c(\beta )^2-(1-p(\beta )c(\beta ))^2.
\end{equation}

\begin{theorem}\label{thm-a}
	Let \((M,g)\) be a complete Riemannian manifold with Bakry--\'Emery Ricci curvature \(\Ric_V\ge -A\), where \(A\ge 0\) is a constant and \(V\) is a smooth vector field on \(M\). Let \((N,h)\) be a Riemannian manifold with sectional curvature \(\Sec^N\le -B\), where \(B>0\) is a constant. Let \(u:M\to N\) be a \(V\)-harmonic map of generalized dilatation of order \(\beta \). Then
	\begin{equation}\label{eq-d}
	    u^*h\le \frac{Ac(\beta )}{BD(\beta )}g.
	\end{equation}
	In particular, 
	\begin{equation}\label{eq-e}
		u^*h\le \frac{A\beta^2}{B}g.
	\end{equation}
	If \(\beta \ge 1\), then 
	\begin{equation}\label{eq-f}
		u^*h\le \frac{A(1+\beta^2)}{2B}g.
	\end{equation}
\end{theorem}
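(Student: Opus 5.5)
The plan is to run a Bochner-type argument on the smooth energy density \(e(u)=|\dd u|^2\), not on \(\lambda_1\), and then convert a bound on \(e(u)\) into a bound on \(u^*h\) through the dilatation condition. First I recall the Bochner formula for \(V\)-harmonic maps, which follows from \(\tau_V(u)=0\) and the definition \(\Ric_V=\Ric-\frac12\mathcal L_Vg\):
\[
\tfrac12\Delta_V e(u)=|\nabla \dd u|^2+\sum_i\langle \dd u(\Ric_V e_i),\dd u(e_i)\rangle-\sum_{i,j}R^N(\dd u(e_i),\dd u(e_j),\dd u(e_i),\dd u(e_j)).
\]
The hypothesis \(\Ric_V\ge -A\) bounds the Ricci term below by \(-A\,e(u)\). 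The hypothesis \(\Sec^N\le -B\) bounds the curvature term below by \(B\,Q(u)=B\big(e(u)^2-\sum_i\lambda_i^2\big)\), after diagonalizing \(U\) pointwise.

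The key algebraic step is to show \(Q(u)\ge D(\beta)\,e(u)^2\). Normalize \(x_i=\lambda_i/e(u)\), so that \(\sum_i x_i=1\) and \(x_i\ge0\). Then \eqref{eq-y} gives \(x_1\le\beta^2(1-x_1)\), that is \(x_1\le c(\beta)\), and hence every \(x_i\le c(\beta)\). The function \(\sum x_i^2\) is convex, so its maximum over this polytope is attained at a vertex. A vertex has \(p(\beta)\) coordinates equal to \(c\), one coordinate equal to the remainder \(1-pc\in[0,c)\), and the rest zero. Therefore \(\sum\lambda_i^2\le (pc^2+(1-pc)^2)e^2\), which is exactly \(Q\ge D e^2\).

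Combining these bounds, and using the refined Kato inequality \(|\nabla\dd u|^2\ge|\nabla e|^2/(4e)\) where \(e>0\), I get a differential inequality of the form
\[
\Delta_V e\ge \frac{|\nabla e|^2}{2e}-2Ae+2BD\,e^2 .
\]
From this I want \(\sup_M e\le A/(BD)\). The cleanest route is to apply an Omori--Yau maximum principle for \(\Delta_V\) to a bounded transform such as \(w=(e+1)^{-1/2}\), or alternatively to run a Cheng--Yau cutoff argument on geodesic balls. Either way, a sequence of almost-minimum points of \(w\) forces \(BD\,e^2-Ae\le o(e^2)\), which yields the bound. I expect this step to be the main obstacle. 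It needs a \(\Delta_V\)-comparison for the distance function, or the Omori--Yau principle for \(\Delta_V\), on a complete manifold with \(\Ric_V\ge -A\), and I want to impose as little on \(V\) as possible. I would first try the known \(V\)-Laplacian comparison and maximum principle of Chen--Jost--Qiu, and check carefully whether some growth control on \(V\) is needed there. Once \(e\le A/(BD)\) holds, the estimate \(u^*h\le\lambda_1g\le c\,e\,g\) gives \eqref{eq-d}.

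For \eqref{eq-e}, it suffices to prove \(c/D\le\beta^2\), which is equivalent to \(D\ge 1-c\), i.e. \(pc^2+(1-pc)^2\le c\). With \(r=1-pc\in[0,c)\) this reads \(pc^2+r^2\le c\cdot pc+c\cdot r=c\), which holds because \(r^2\le cr\). Equality holds iff \(r=0\), i.e. iff \(1/c=1+\beta^{-2}\) is an integer, i.e. iff \(\beta=1/\sqrt{k}\) for a positive integer \(k\). For \eqref{eq-f}, the case \(\beta\ge1\) gives \(c\in[1/2,1)\) and hence \(p=1\). Then \(D=1-c^2-(1-c)^2=2c(1-c)\), so \(c/D=1/(2(1-c))=(1+\beta^2)/2\).
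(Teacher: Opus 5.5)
Your proposal is correct and is essentially the paper's proof: the same Bochner inequality \(\frac12\Delta_V e\ge -Ae+BQ\), the same bound \(Q\ge D(\beta)e^2\) (you get it from convexity and the vertex structure of the capped simplex, while the paper uses an exchange argument showing that at most one coordinate of a maximizer lies in \((0,c)\); your Kato term is not needed), and the same Omori--Yau step for \(\Delta_V\) applied to \(-(e+C)^{-1/2}\), which the paper cites from Chen--Qiu. That principle needs only \(\Ric_V\ge -A\) and no growth control on \(V\), because along minimizing geodesics the drift is absorbed into \(\Ric_V\) and gives \(\Delta_V r\le C+Ar\); there is also one harmless slip, which the paper shares: at \(\beta=1\) one has \(c=1/2\) and \(p=2\) rather than \(p=1\), but then \(1-pc=0\) and \(D=1/2=2c(1-c)\), so the \(\beta\ge 1\) bound is unaffected.
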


\begin{remark}\label{rem-a}
    \begin{enumerate}[leftmargin=*,label=\normalfont(\arabic*)]
    \item When \(V=0\), our simplified bound \eqref{eq-e} coincides with that of Shen \cite[Main theorem]{shen1984}. However, our precise estimate \eqref{eq-d} actually strictly improves Shen's result in almost all cases. As will be seen in the proof below, the equality
    \[
    \frac{c(\beta )}{D(\beta )}=\beta^2 
    \]
    holds if and only if \(D(\beta )=1-c(\beta )\). By Lemma~\ref{lem-a}, this requires \(1/\beta^2\) to be an integer, that is, \(\beta =1,\ 1/\sqrt{2},\ 1/\sqrt{3}, \cdots \). For all other noninteger values of \(1/\beta^2\), we have \(D(\beta )>1-c(\beta )\), which makes our constant in \eqref{eq-d} strictly smaller than \(A\beta^2/B\). Consequently, Theorem~\ref{thm-a} extends and strictly improves Shen's main theorem.

    \item The estimate of Chen--Zhao \cite[Theorem~3.3]{cz2017} is 
    \[
	u^*h\le \frac{Ak^2\beta^4}{2B(1+\beta^2)}g.
    \]
    At a noncritical point, \eqref{eq-y} forces \(\beta^2\ge 1/(k-1)\). Hence the ratio of this constant to the constant in \eqref{eq-e} is at least \(k/2\). In other words, compared with the estimate of Chen–Zhao, our constants are not only independent of \(\dim M\) and \(\dim N\), but are indeed strictly smaller. 

    \item In the case \(V=-\nabla f\), Chen--Li--Qiu \cite[Theorem~1]{clq2022} obtained 
    \[
	u^*h\le \frac{3A\beta^2}{B}g
    \]
    by applying a local cutoff argument directly to \(\lambda_1\). Exploiting the symmetry of \(\frac{1}{2}\mathcal{L}_{\nabla f}g=\Hess f\), they derived from the Bochner formula the differential inequality 
    \[
	\Delta_V\lambda_1\ge 2\left(B\left(\lambda_1\sum\lambda_i-\lambda_1^2\right)-A\lambda_1\right)\ge 2\left(\frac{B}{\beta^2}\lambda_1^2-A\lambda_1\right),
    \]
    which would yield the bound \(A\beta^2/B\) if the cutoff errors vanished. However, under the Bakry--\'Emery Ricci curvature condition, the weighted Laplacian comparison forces \(\Delta_Vr\) to grow linearly, so the distance cutoff error does not vanish as the cutoff radius tends to infinity, enlarging the constant to \(3A\beta^2/B\). The smooth trace \(e(u)=\sum_{i=1}^k\lambda_i\) avoids the eigenvalue regularity issue and the cutoff loss, while also applying to a general vector field \(V\). Our Theorem~\ref{thm-a} therefore not only covers but also strictly improves their result.
    \end{enumerate}
\end{remark}

\begin{remark}
	When \(A=0\), Theorem~\ref{thm-a} directly yields a Liouville-type result: there exists no nonconstant \(V\)-harmonic map of generalized dilatation from a complete Riemannian manifold with nonnegative Bakry--\'Emery Ricci curvature to a Riemannian manifold whose sectional curvature admits a negative upper bound.
\end{remark}

Before proving Theorem~\ref{thm-a}, we establish two preparatory lemmas. The first one is a sharp algebraic inequality used to bound the curvature term of the target manifold in the Bochner formula from below by the square of the energy density, and the second one is an analytic lemma used to extract a uniform upper bound for the energy density from its differential inequality.

\begin{lemma}\label{lem-a}
	Let \(x_1,\cdots ,x_k\ge 0\), \(\sum_{i=1}^kx_i=1\), and \(\max x_i\le c<1\) for some \(c\). Put 
	\[
	p=\left \lfloor \frac{1}{c}\right \rfloor,\qquad D(c)=1-pc^2-(1-pc)^2.
	\]
	Then 
	\[
	1-\sum_{i=1}^kx_i^2\ge D(c).
	\]
	Equality holds if and only if
	\[
	(x_1,\cdots ,x_k)=(\underbrace{c,\cdots ,c}_{p\ \mbox{times}},1-pc,0,\cdots ,0)
	\]
	up to a permutation, provided that the constrained simplex is nonempty.
\end{lemma}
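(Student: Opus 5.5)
We must show \(\sum x_i^2\le pc^2+(1-pc)^2\) on the set \(P=\{x\in\bbR^k: x_i\ge 0,\ \sum x_i=1,\ \max x_i\le c\}\), assumed nonempty, which forces \(kc\ge 1\). The map \(F(x)=\sum x_i^2\) is strictly convex and \(P\) is a compact convex polytope, so the maximum of \(F\) is attained at a vertex of \(P\). Moreover, any maximizer must be a vertex: if a maximizer lay in the relative interior of a segment in \(P\), strict convexity along that segment would give a larger value at one endpoint. So the plan is to identify the vertices of \(P\) and compare their values.

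\emph{Step 1: vertices.} \(P\) is the hyperplane \(\sum x_i=1\) intersected with the box \([0,c]^k\). A point of \(P\) is a vertex precisely when at most one coordinate lies strictly between \(0\) and \(c\). Indeed, if two coordinates \(x_i,x_j\) lie in \((0,c)\), the perturbations \(x\pm\varepsilon(e_i-e_j)\) stay in \(P\) for small \(\varepsilon>0\), so \(x\) is not extreme. Hence a vertex has \(q\) coordinates equal to \(c\), at most one coordinate equal to \(r=1-qc\in[0,c)\), and the remaining coordinates zero. The constraint \(0\le 1-qc<c\) forces \(q=\lfloor 1/c\rfloor=p\) when \(1/c\notin\mathbb Z\). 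When \(1/c\) is an integer, \(q=p\) and \(r=0\), since \(q=p-1\) would give \(r=c\), contradicting \(r<c\). In either case every vertex is a permutation of \((c,\dots,c,1-pc,0,\dots,0)\) with \(p\) entries equal to \(c\). This point lies in \(P\) because \(p+1\le k\) when \(1-pc>0\), and \(p\le k\) otherwise, both following from \(kc\ge1\). Its value is \(F=pc^2+(1-pc)^2\), which gives \(1-\sum x_i^2\ge D(c)\).

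\emph{Step 2: equality.} Equality holds exactly at maximizers of \(F\). By the strict-convexity remark above, every maximizer is a vertex, and Step 1 lists the vertices, all equal up to permutation. This gives the stated characterization.

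The main obstacle is making Step 1 rigorous, namely the vertex description and the integer case \(1/c\in\mathbb Z\). The perturbation argument handles the vertex description cleanly. The integer case is handled by noting that \(1-pc=0\) there, so the listed tuple reduces to \(p\) entries equal to \(c\), consistent with the formula. An alternative I would keep in reserve is a direct smoothing argument: if two coordinates satisfy \(0<x_j\le x_i<c\), moving mass from \(x_j\) to \(x_i\) strictly increases \(\sum x^2\). Iterating this reaches the extremal configuration.
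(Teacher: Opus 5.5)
Your argument is correct and is essentially the paper's proof. The paper also takes a maximizer of \(\sum x_i^2\) on the compact set and rules out two coordinates in \((0,c)\) by the pairwise perturbation \(x_i\mapsto x_i-t,\ x_j\mapsto x_j+t\); your strict-convexity/extreme-point step, and your reserve smoothing argument, are repackagings of this same perturbation, and the paper then fixes \(q=\lfloor 1/c\rfloor=p\) from \(0\le 1-qc<c\) exactly as you do (your check via \(kc\ge 1\) that the extremal tuple fits in \(k\) coordinates is left implicit there).
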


\begin{proof}
	If \(c<1/k\), the feasible set is empty, and the conclusion holds vacuously. We assume \(c\ge 1/k\). Set 
	\[
	K=\left\{x\in \bbR^k: 0\le x_i\le c,\ \sum_{i=1}^kx_i=1\right\}.
	\]
	The set \(K\) is a nonempty closed and bounded subset of \(\bbR^k\), hence it is compact. Since the function \(f(x)=\sum_{i=1}^kx_i^2\) is continuous, \(f\) attains a global maximum on \(K\) at some point \(x^*=(x_1^*\cdots x_k^*)\). Suppose there exist two indices \(i\ne j\) such that \(0<x_i^*\le x_j^*<c\). Set \(\delta =\min\{x_i^*,c-x_j^*\}>0\). For any \(t\in [0,\delta ]\), define a perturbed vector \(x(t)\in \bbR^k\) by
	\[
	x_i(t)=x_i^*-t,\qquad x_j(t)=x_j^*+t,\qquad x_\ell (t)=x_\ell ^*\ \mbox{for all } \ell \ne i,j. 
	\]
	Since \(0\le x_i^*-t<c\) and \(0<x_j^*\le x_j^*+t\le x_j^*+(c-x_j^*)=c\), the perturbed vector field satisfies \(0\le x_\ell (t)\le c\) for all \(\ell \). Moreover, 
	\[
	\sum_{\ell =1}^kx_\ell(t)=(x_i^*-t)+(x_j^*+t)+\sum_{\ell \ne i,j}x_\ell ^*=\sum_{\ell =1}^kx_\ell ^*=1.
	\]
	Thus \(x(t)\in K\) for all \(t\in [0,\delta ]\). The change in the objective value,
	\begin{align*}
        \sum_{\ell =1}^kx_\ell (t)^2-\sum_{\ell =1}^k(x_\ell ^*)^2
        =&(x_i^*-t)^2+(x_j^*+t)^2-(x_i^*)^2-(x_j^*)^2\\
        =&2t(x_j^*-x_i^*)+2t^2,
	\end{align*}
	is strictly positive whenever \(t\in (0,\delta ]\). This contradicts the maximality of \(x^*\). Therefore, at any global maximum point, it is impossible to have two distinct coordinates strictly between \(0\) and \(c\). Consequently, at most one coordinate can lie in the open interval \((0,c)\), while all other coordinates must belong to the boundary \(\{0,c\}\).

	Based on the above discussion, at a global maximum point \(x^*\), there are \(q\) coordinates equal to \(c\), one coordinate equal to \(x_r\in [0,c)\) for some \(r\in \{1,\cdots ,k\}\) (we require \(x_r<c\) because if \(x_r=c\) it would be counted among the \(q\) coordinates equal to \(c\)), and the remaining \(k-q-1\) coordinates equal to \(0\). The case of no coordinate in \((0,c)\) corresponding to \(x_r\in \{0,c\}\). The constraint \(\sum_{i=1}^kx_i=1\) gives \(x_r=1-qc\). Feasibility requires \(0\le x_r<c\), which is equivalent to 
	\[
	\frac{1}{c}-1<q\le \frac{1}{c}.
	\]
	Since \(q\) is a nonnegative integer and the interval \((1/c-1,1/c]\) has length \(1\), it contains exactly one integer, namely \(q=\lfloor 1/c\rfloor =p\). Consequently, the objective value is
	\[
	\sum_{i=1}^k(x_i^*)^2=pc^2+(1-pc)^2,
	\]
	achieved by \(p\) coordinates equal to \(c\), one coordinate equal to \(1-pc\), and the remaining \(k-p-1\) coordinates equal to \(0\). We complete the proof.
\end{proof}

\begin{lemma}\label{lem-b}
	Let \(w\ge 0\) be a smooth function on a Riemannian manifold \((M,g)\) and let \(V\) be a smooth vector field on \(M\). Suppose that, for some \(a\ge 0,\ b>0,\ \alpha >0\) and \(E_0\ge 0\), 
	\begin{equation}\label{eq-a}
		\frac{1}{2}\Delta_Vw\ge -aw+bw^{1+\alpha }
	\end{equation}
	on \(\{w>E_0\}\). Then
	\[
	\sup_Mw\le \max\left\{E_0,\left(\frac{a}{b}\right)^{1/\alpha }\right\}.
	\]
\end{lemma}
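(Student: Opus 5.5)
The plan is to treat Lemma~\ref{lem-b} as a generalized maximum principle at infinity. I will use it in the setting where it is applied in Theorem~\ref{thm-a}, namely \((M,g)\) complete with \(\Ric_V\ge -A\). As literally stated, with no completeness assumption, the conclusion can fail: on an incomplete manifold \(w\) may blow up at the boundary. Under these hypotheses I will invoke the Omori--Yau type maximum principle for the drift Laplacian \(\Delta_V\), in the form available for complete manifolds with Bakry--\'Emery Ricci curvature bounded below (as in the framework of Chen--Jost--Qiu). That principle gives the following. If \(F\) is smooth and bounded above, there is a sequence \(x_j\) with \(F(x_j)\to\sup F\), \(|\nabla F|(x_j)\to 0\) and \(\limsup \Delta_V F(x_j)\le 0\).

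Since \(w\) is not known a priori to be bounded, I will not apply this to \(w\) directly but to a bounded increasing transform. The choice is \(F=-(1+w)^{-\gamma}\) for a small \(\gamma>0\). Then \(F\) is bounded above by \(0\), and \(\sup F\) is attained along a sequence exactly when \(w\) tends to \(\sup w\) (possibly \(+\infty\)). Argue by contradiction: suppose \(\sup w>\max\{E_0,(a/b)^{1/\alpha}\}\), and take an Omori--Yau sequence \(x_j\) for \(F\). Then \(w(x_j)\to\sup w\), so eventually \(w(x_j)>E_0\) and \eqref{eq-a} holds at \(x_j\). A chain-rule computation gives
\[
\Delta_V F=\gamma(1+w)^{-\gamma-1}\Delta_V w-\gamma(\gamma+1)(1+w)^{-\gamma-2}|\nabla w|^2 ,
\]
and \(|\nabla w|^2=\gamma^{-2}(1+w)^{2\gamma+2}|\nabla F|^2\). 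Substituting \eqref{eq-a} and multiplying by \((1+w)^{1+\gamma}/\gamma\) yields, at \(x_j\),
\[
2\bigl(bw^{1+\alpha}-aw\bigr)\le \gamma^{-1}(1+w)^{1+\gamma}\Delta_V F+\frac{\gamma+1}{\gamma^{2}}(1+w)^{1+2\gamma}|\nabla F|^2 .
\]

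The key step is controlling the right-hand side as \(j\to\infty\). If \(\sup w<\infty\), the prefactors are bounded. The right-hand side therefore has \(\limsup\le 0\), and in the limit \(b(\sup w)^{1+\alpha}\le a\sup w\), which forces \(\sup w\le (a/b)^{1/\alpha}\), a contradiction. If \(\sup w=\infty\), the powers of \(w\) on the right must be beaten by \(w^{1+\alpha}\) on the left. For this I choose \(\gamma<\alpha/2\), so that \(1+2\gamma<1+\alpha\). I also need the quantitative form of Omori--Yau: the sequence can be chosen with \(|\nabla F|(x_j)<1/j\) and \(\Delta_V F(x_j)<1/j\). Dividing by \(w^{1+\alpha}\) and letting \(j\to\infty\) then gives \(2b\le 0\), again a contradiction.

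I expect the main obstacle to be the availability and exact form of the Omori--Yau principle for \(\Delta_V\) with a general, non-gradient \(V\) under only \(\Ric_V\ge -A\). The weighted Laplacian comparison gives \(\Delta_V r\le C(1+r)\), linear growth rather than bounded, and the remark on Chen--Li--Qiu shows that naive cutoff arguments lose a constant factor exactly here. Linear growth of \(\Delta_V r\) is still enough for the Omori--Yau principle, via the standard criterion using the exhaustion \(r\) with \(\Delta_V r\le C r\) for large \(r\). If that citation is unavailable, my fallback is to prove the needed sequence directly by maximizing \(F-\varepsilon\log(1+r^2)\) (smoothing \(r\) via Calabi's trick at the cut locus). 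The linear bound on \(\Delta_V r\) makes the correction \(\varepsilon\Delta_V\log(1+r^2)=O(\varepsilon)\), which vanishes as \(\varepsilon\to 0\) and so produces no constant loss.
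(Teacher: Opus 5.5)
Your argument is correct and is essentially the paper's proof. Both apply the Omori--Yau principle for \(\Delta_V\) (the Chen--Qiu theorem the paper cites) to a bounded transform \(F=-(w+C)^{-\gamma}\), use the chain-rule identity to rule out \(\sup_M w=\infty\), and then read off \(\sup_M w\le (a/b)^{1/\alpha}\). The paper takes the borderline exponent \(\gamma=\alpha/2\), which also works in your computation, so neither your strict inequality \(\gamma<\alpha/2\) nor the quantitative \(1/j\) rates are needed.

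You are also right that completeness with \(\Ric_V\ge -A\) (giving linear growth of \(\Delta_V r\)) is required. The lemma as stated omits it, but the paper's proof relies on it implicitly through the cited maximum principle, and it holds in the setting of Theorem~\ref{thm-a}.
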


\begin{proof}
	Fix \(C>0\), and define \(F=-(w+C)^{-\alpha /2}\). Since \(w\ge 0\), we have \(-C^{\alpha /2}\le F<0\), that is, \(F\) is bounded. By \cite[Theorem~1]{cq2016}, there exist points \(\{x_j\}\subset M\) such that
	\[
	\lim_{j\to \infty }F(x_j)=\sup_MF,\qquad \lim_{j\to \infty }|\nabla F|(x_j)=0,\qquad \lim_{j\to \infty }\Delta_VF(x_j)\le 0.
	\]
	From the definition of \(F\), we know that \(\{x_j\}\) is also the maximizing sequence for \(w\). Now, differentiating \(F\), we obtain
	\[
	\nabla F=\frac{\alpha }{2}(w+C)^{-\alpha /2-1}\nabla w.
	\]
	By calculating the divergence and then combining it with the drift term \(\langle V,\nabla F\rangle \), we arrive at
	\[
    \Delta_VF=-\frac{\alpha }{2}\left(\frac{\alpha }{2}+1\right)(w+C)^{-\alpha /2-2}|\nabla w|^2+\frac{\alpha }{2}(w+C)^{-\alpha /2-1}\Delta_Vw,
	\]
	which is equivalent to 
	\begin{equation}\label{eq-b}
	    \frac{\Delta_Vw}{(w+C)^{\alpha +1}}=\frac{2}{\alpha }\frac{\Delta_VF}{(w+C)^{\alpha /2}}+\frac{4}{\alpha^2}\left(\frac{\alpha }{2}+1\right)|\nabla F|^2.
	\end{equation}
	If \(w(x_j)=\sup_Mw=+\infty \), the right-hand side of \eqref{eq-b} has nonpositive upper limit, while dividing \eqref{eq-a} by \((w+C)^{\alpha +1}\) yields a positive lower limit \(2b\). This is impossible, and therefore \(w(x_j)=\sup_Mw<+\infty \). If \(\sup_Mw>E_0\), the sequence \(\{x_j\}\) eventually lies in \(\{w>E_0\}\), since \(w+C\) is then bounded along \(\{x_j\}\), \eqref{eq-b} yields
	\[
	-a\sup_Mw+b\left(\sup_Mw\right)^2\le 0.
	\]
	Hence \(\sup_Mw\le (a/b)^{1/\alpha }\). If \(\sup_Mw\le E_0\), there is nothing to prove.
\end{proof}

With the preparatory lemmas proved, we are now in a position to prove Theorem~\ref{thm-a}.

\begin{proof}[Proof of Theorem~\ref{thm-a}]
    Let \(\{e_i\}\) a local orthonormal frame field of \(M\). Applying the Bochner formula for \(V\)-harmonic maps, together with the definition of the matrix \(U\), we have
    \begin{equation}\label{eq-g}
        \begin{split}
        	\frac{1}{2}\Delta_Ve(u)
        	=&|\nabla \dd u|^2+\sum_{i,j}\Ric_V(e_i,e_j)\langle \dd u(e_i),\dd u(e_j)\rangle -\sum_{i,j}\langle R^N(\dd u(e_i),\dd u(e_j))\dd u(e_j),\dd u(e_i)\rangle \\
        	=&|\nabla \dd u|^2+\sum_{i,j}\Ric_V(e_i,e_j)U_{ij}-\sum_{i,j}\Sec^N(\dd u(e_i),\dd u(e_j))\left(U_{ii}U_{jj}-U_{ij}^2\right).
        \end{split}
    \end{equation}
    Since \(U\) is a symmetric positive semi-definite matrix and \(\Ric_V\ge -Ag\), we obtain
    \[
    \sum_{i,j}\Ric_V(e_i,e_j)U_{ij}\ge -A\trc U=-Ae(u).
    \]
    On the other hand, the symmetric positive semi-definiteness of \((U_{ij})\) also implies \(U_{ii}U_{jj}-U_{ij}^2\ge 0\). Combining this with \(\Sec^N\le -B\), we deduce
    \begin{align*}
    	-\sum_{i,j}\Sec^N(\dd u(e_i),\dd u(e_j))\left(U_{ii}U_{jj}-U_{ij}^2\right)
    	\ge &B\sum_{i,j}\left(U_{ii}U_{jj}-U_{ij}^2\right)\\
    	=&B\left((\trc U)^2-\trc (U^2)\right)\\
    	=&BQ(u).
    \end{align*}
    Therefore, by substituting the above two inequalities into \eqref{eq-g}, we obtain
    \begin{equation}\label{eq-h}
    	\frac{1}{2}\Delta_Ve(u)\ge -Ae(u)+BQ(u).
    \end{equation}
    Set \(x_i=\lambda_i/\sum_{i=1}^k\lambda_i\) for \(i=1,\cdots ,k\). Then \(\sum_{i=1}^kx_i=1\). At every point where \(e(u)=\sum_{i=1}^k\lambda_i>0\), \eqref{eq-y} gives \(\lambda_1\le c(\beta )\sum_{i=1}^k\lambda_i\), which implies \(x_i\le c(\beta )<1\) for all indices \(i\). Applying Lemma~\ref{lem-a}, then gives
    \[
    Q(u)=\left(1-\sum_{i=1}^kx_i^2\right)\left(\sum_{i=1}^k\lambda_i\right)^2\ge D(\beta )\left(\sum_{i=1}^k\lambda_i\right)^2=D(\beta )e(u)^2.
    \]
    Thus we deduce
    \begin{equation}\label{eq-i}
    	-\sum_{i,j}\Sec^N(\dd u(e_i),\dd u(e_j))\left(U_{ii}U_{jj}-U_{ij}^2\right)
    	\ge BD(\beta )e(u)^2.
    \end{equation}
    Substituting \eqref{eq-i} into \eqref{eq-h} yields
    \[
    \frac{1}{2}\Delta_Ve(u)\ge -Ae(u)+BD(\beta )e(u)^2.
    \]
    By using Lemma~\ref{lem-b} with \(E_0=0\) and \(\alpha =1\), we obtain
    \[
    e(u)\le \frac{A}{BD(\beta )},
    \]
    and hence \eqref{eq-d} holds:
    \[
    u^*h\le \lambda_1g\le c(\beta )e(u)g\le \frac{Ac(\beta )}{BD(\beta )}g.
    \]
    Moreover, 
    \[
    \sum_{i=1}^kx_i^2\le c(\beta )\sum_{i=1}^kx_i=c(\beta ),
    \]
    so \(D(\beta )=\min \left(1-\sum_{i=1}^kx_i^2\right)=1-\max \sum_{i=1}^kx_i^2\ge 1-c(\beta )\). Consequently,
    \[
    \frac{c(\beta )}{D(\beta )}\le \frac{c(\beta )}{1-c(\beta )}=\beta^2,
    \]
    and thus \eqref{eq-e} holds. When \(\beta \ge 1\), we have \(p(\beta )=\lfloor 1+1/\beta^2\rfloor =1\), whence \(D(\beta )=2\beta^2/(1+\beta^2)^2\) immediately. Consequently, \(c(\beta )/D(\beta )=(1+\beta^2)/2\), and now \eqref{eq-f} follows directly from \eqref{eq-d}.
\end{proof}

\section{Applications}\label{sec-3}

Once Theorem~\ref{thm-a} is applied to \(V\)-harmonic maps of generalized dilatation on Ricci solitons, one yields Schwarz-type estimates or Liouville-type results. Recall that a Riemannian manifold \((M,g)\) is called a Ricci soliton if there exist a smooth vector field \(V\) on \(M\) and a constant \(\rho \) such that \(\Ric_V=\Ric -\frac{1}{2}\mathcal{L}Vg=\rho g\). A Ricci soliton is side to be shrinking, steady, and expanding, if \(\rho >0,\ =0\), and \(<0\), respectively. Then Theorem~\ref{thm-a} implies the following corollary.

\begin{corollary}
	Let \((M,g,V,\rho )\) be a complete Ricci soliton and let \((N,h)\) be a Riemannian manifold with sectional curvature bounded above by a negative constant \(-B\). Let \(u:M\to N\) be a \(V\)-harmonic map of generalized dilatation of order \(\beta \). Then,
	\begin{enumerate}
		\item if \((M,g,V,\rho )\) is shrinking or steady, then \(u\) is a constant map.
		\item if \((M,g,V,\rho )\) is expanding, then
		\[
		u^*h\le -\frac{\rho \beta^2}{B}g.
		\]
	\end{enumerate}
\end{corollary}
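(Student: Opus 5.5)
The corollary follows from Theorem~\ref{thm-a} once we set \(A\) from the soliton constant. Since \(\Ric_V=\rho g\), the hypothesis \(\Ric_V\ge -A\) holds with \(A=\max\{-\rho,0\}\), which satisfies \(A\ge 0\) as Theorem~\ref{thm-a} requires. The target hypothesis \(\Sec^N\le -B\) and the \(V\)-harmonicity together with generalized dilatation of order \(\beta \) are assumed exactly as in Theorem~\ref{thm-a}, and completeness of \(M\) is part of the definition of a complete Ricci soliton.

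\emph{Shrinking or steady case (\(\rho \ge 0\)).} Here \(\Ric_V=\rho g\ge 0\), so we may take \(A=0\). Estimate \eqref{eq-e} then gives \(u^*h\le 0\), i.e.\ \(|\dd u(X)|^2=u^*h(X,X)\le 0\) for every tangent vector \(X\). Hence \(\dd u\equiv 0\). Since \(M\) is complete, it is connected, so \(u\) is constant. This is the Liouville-type statement already noted in the remark following Theorem~\ref{thm-a}. Alternatively, the same conclusion comes from the proof itself: Lemma~\ref{lem-b} with \(a=0\) forces \(\sup e(u)\le 0\).

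\emph{Expanding case (\(\rho <0\)).} Take \(A=-\rho >0\). Estimate \eqref{eq-e} yields \(u^*h\le \frac{A\beta^2}{B}g=-\frac{\rho \beta^2}{B}g\), which is the claim. Estimate \eqref{eq-d} would in fact give the sharper constant \(-\rho c(\beta )/(BD(\beta ))\), but the stated form only needs \eqref{eq-e}.

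There is no real obstacle beyond bookkeeping. The only points needing care are that \(A\) must be nonnegative, which is why the steady and shrinking cases are handled by choosing \(A=0\) rather than \(A=-\rho \), and that connectedness of \(M\) is used to pass from \(\dd u=0\) to constancy.
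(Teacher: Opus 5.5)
Your proposal is correct and matches the paper's argument: the paper derives the corollary directly from Theorem~\ref{thm-a}, taking \(A=0\) when \(\rho\ge 0\) (so \(u^*h\le 0\), forcing \(\dd u\equiv 0\)) and \(A=-\rho\) when \(\rho<0\) (giving \(u^*h\le -\frac{\rho\beta^2}{B}g\) from \eqref{eq-e}). One small quibble is that connectedness of \(M\) does not follow from completeness; it is a standing convention that the paper also assumes implicitly, and it does not affect your argument.
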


The classical uniformization theorem implies that any compact Riemann surface of genus at least two admits the Poincar\'e metric. Consequently, we have the following Liouville-type result, which extends \cite[Corollary~3.7]{zhao2020} from \(V\)-harmonic morphisms (horizontally weakly conformal \(V\)-harmonic maps) to \(V\)-harmonic maps of generalized dilatation.

\begin{corollary}
	Let \((M,g,V,\rho )\) be a complete shrinking or steady Ricci soliton. Then there exists no nonconstant \(V\)-harmonic map of generalized dilatation from \((M,g,V,\rho )\) to a compact Riemann surface of genus at least two. 
\end{corollary}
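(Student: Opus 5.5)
The plan is to reduce the statement to the first corollary (equivalently, to Theorem~\ref{thm-a} with \(A=0\)) by equipping the target with a hyperbolic metric. Let \(\Sigma\) be a compact Riemann surface of genus at least two. By the uniformization theorem, \(\Sigma\) carries a complete conformal metric \(h_0\) of constant Gaussian curvature \(-1\), the Poincar\'e metric. Since \(\dim \Sigma=2\), the sectional curvature of \((\Sigma,h_0)\) is identically \(-1\), so \(\Sec^{\Sigma}\le -B\) with \(B=1>0\).

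Next I check that the hypotheses of the previous corollary hold for maps into \((\Sigma,h_0)\). Since \(\Sigma\) is given only as a Riemann surface, I interpret \(V\)-harmonicity and generalized dilatation with respect to this Poincar\'e metric \(h_0\); this is the implicit choice made in the paper's lead-in sentence. Condition \eqref{eq-z} makes sense here because \(\dim N=2\ge 2\), and then \(k=\min\{m,2\}\). The domain \((M,g,V,\rho)\) is complete and satisfies \(\Ric_V=\rho g\ge 0\), because \(\rho\ge 0\) for shrinking or steady solitons. So Theorem~\ref{thm-a} applies with \(A=0\).

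Suppose \(u:M\to\Sigma\) is a \(V\)-harmonic map of generalized dilatation of order \(\beta\). Estimate \eqref{eq-e}, or part (1) of the preceding corollary, gives \(u^*h_0\le 0\cdot g\). Hence \(\dd u\equiv 0\), and since \(M\) is connected (as a complete Riemannian manifold, implicitly), \(u\) is constant. This contradicts the assumption that \(u\) is nonconstant, which proves the statement.

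The main subtlety is in the setup rather than in any estimate: one must make sure the curvature hypothesis is expressed through the uniformizing hyperbolic metric, and that "\(V\)-harmonic" refers to that metric. One should also note that compactness of \(\Sigma\) is not actually needed, only a negative upper curvature bound; compactness, together with genus at least two, is what guarantees the hyperbolic metric exists.
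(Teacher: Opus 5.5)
Your proposal is correct and follows the paper's own argument: uniformization gives the Poincar\'e metric with \(\Sec\equiv -1\), and since \(\Ric_V=\rho g\ge 0\), Theorem~\ref{thm-a} (equivalently, part (1) of the preceding corollary) applies with \(A=0\) and forces \(\dd u\equiv 0\). You are also right that \(V\)-harmonicity has to be understood with respect to this hyperbolic metric; this is exactly the convention the paper uses implicitly.
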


We take holomorphic maps between almost Hermitian manifolds as another application, since they naturally satisfy the generalized dilatation condition. Let \((M^{2m},J,g)\) and \((N^{2n},J',h)\) be two almost Hermitian manifolds. A smooth map \(u:M\to N\) is said to be holomorphic if \(\dd u\circ J=J'\circ \dd u\). A holomorphic map between almost Hermitian manifolds automatically satisfy the generalized dilatation with order \(\beta =1\) (see \cite{cz2017,gh1979}). Moreover, when the target manifold \(N\) is quasi-K\"ahler, any holomorphic map is \(V\)-harmonic with \(V=-J\delta J\) (see \cite{cz2017}). Consequently, we obtain the following result.

\begin{corollary}
	Let \((M^{2m},J,g)\) be a complete almost Hermitian manifold with \(\Ric +\frac{1}{2}\mathcal{L}_{J\delta J}g\ge -A\), where \(A\ge 0\) is a constant. Let \((N^{2n},J',h)\) be a quasi-K\"ahler manifold with sectional curvature bounded above by a negative constant \(-B\). Let \(u:M\to N\) be a holomorphic map. Then 
	\[
	u^*h\le \frac{A}{B}g.
	\]
	In particular, if \(A=0\), then there is no nonconstant holomorphic map from \(M\) to \(N\).
\end{corollary}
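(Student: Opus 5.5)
The plan is to apply Theorem~\ref{thm-a} with \(V=-J\delta J\) and \(\beta=1\), after checking that its hypotheses hold in this setting.

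First, the curvature condition. Since \(V=-J\delta J\), we have
\[
\Ric_V=\Ric-\frac{1}{2}\mathcal{L}_Vg=\Ric+\frac{1}{2}\mathcal{L}_{J\delta J}g,
\]
so the assumption on \(M\) is exactly \(\Ric_V\ge -A\). Because \(M\) is complete and \(\Sec^N\le -B\), the curvature hypotheses of Theorem~\ref{thm-a} are satisfied.

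Second, the map itself. As cited from \cite{cz2017}, a holomorphic map into a quasi-K\"ahler manifold is \(V\)-harmonic with \(V=-J\delta J\). I would verify the sign convention by computing \(\tau(u)\) in a \(J\)-adapted frame \(\{e_i,Je_i\}\). The quasi-K\"ahler condition on \(N\) kills the target contribution, and the remaining term is \(-\dd u(V)\).

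Third, the dilatation order. For \(\beta=1\), holomorphicity makes \(\dd u\) commute with the almost complex structures. Hence \(U\) commutes with \(J\), and the eigenvalues of \(U\) come in equal pairs. In particular \(\lambda_1=\lambda_2\), so \(\lambda_1\le \lambda_2+\cdots+\lambda_m\), which is generalized dilatation of order \(1\).

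With all hypotheses in place, \eqref{eq-d} gives the bound directly. Here \(c(1)=1/2\) and \(p=2\), so
\[
D=1-2\cdot\tfrac{1}{4}-0=\tfrac{1}{2}.
\]
The constant is therefore \(Ac/(BD)=A/B\). Equivalently, this is \eqref{eq-e} or \eqref{eq-f} with \(\beta=1\).

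If \(A=0\), the bound gives \(u^*h\le 0\), so \(\dd u=0\). Since \(M\) is complete, hence connected, \(u\) is constant.

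The main obstacle is the sign convention for \(V\) in the \(V\)-harmonicity claim; everything else reduces to plugging \(\beta=1\) into the theorem.
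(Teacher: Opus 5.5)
Your proposal is correct and matches the paper's argument: set \(V=-J\delta J\) so that \(\Ric_V=\Ric+\frac{1}{2}\mathcal{L}_{J\delta J}g\ge -A\), use the \(V\)-harmonicity of holomorphic maps into quasi-K\"ahler targets from \cite{cz2017}, and apply Theorem~\ref{thm-a} with \(\beta=1\), where \(c(1)/D(1)=1\). The differences are small: you prove the order-\(1\) dilatation directly (the \(J\)-invariance of \(u^*h\) makes the eigenvalues of \(U\) pair up, so \(\lambda_1=\lambda_2\)), whereas the paper cites \cite{cz2017,gh1979}; and for \(A=0\), connectedness of \(M\) is a standing convention, not a consequence of completeness.
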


\begin{remark}
	Note that in the above corollary, we always have \(k=\min\{2m,2n\}\ge 2\) (when the real dimension of the domain manifold is \(2\), \(V=0\) and \(u\) is harmonic). When neither \(M\) nor \(N\) is of real dimension \(2\), this corollary strictly improves \cite[Theorem~4.3]{cz2017}, where the conclusion obtained is \(u^*h\le \frac{k^2A}{4B}g\).
\end{remark}

When the target manifold is an almost K\"ahler manifold whose bisectional curvature is bounded above by a negative constant, one observes that \cite[Theorem~4.4]{cz2017} already gives
\[
u^*h\le \frac{A}{B}g.
\]
This is because, in this setting, the relation between bisectional curvature and sectional curvature allows one to obtain directly a dimension-independent estimate for \(Q(u)\), making Lemma~\ref{lem-a} unnecessary. We therefore omit the details here. Furthermore, it is worth mentioning that when the target manifold is K\"ahler, the curvature condition can be further weakened to requiring only that the holomorphic sectional curvature admit a strictly negative upper bound. In this situation, an estimate of the form \(Q(u)\gtrsim e(u)^2\) can be achieved by appealing to Royden's lemma (\cite[p. 552]{royden1980}), without requiring the map to be of generalized dilatation.


\end{document}